\newtheorem{theorem}{Theorem}[section]
\newtheorem{remark}{Remark}[section]
\newtheorem{definition}{Definition}[section]
\newtheorem{proposition}{Proposition}[section]
\newtheorem{example}{Example}[section]
\begin{document}
	\title{Variational Analysis of Generalized Games over Banach spaces}
	\author{
		Asrifa Sultana\footnotemark[1] \footnotemark[2] , Shivani Valecha\footnotemark[2] }
	\date{ }
	\maketitle
	\def\thefootnote{\fnsymbol{footnote}}
	
	\footnotetext[1]{ Corresponding author. e-mail- {\tt asrifa@iitbhilai.ac.in}}
	\noindent
	\footnotetext[2]{Department of Mathematics, Indian Institute of Technology Bhilai (491001), India.
	}
	
	
		
	\vspace{-1em}
	\begin{abstract}\noindent
		We study generalized games defined over Banach spaces 
		using variational analysis. To reformulate generalized games as quasi-variational inequality problems, we will first form a suitable principal operator and study some significant properties of this operator. Then, we deduce the sufficient conditions under which an equilibrium for the generalized game can be obtained by solving a quasi-variational inequality. Based on this variational reformulation, we derive the existence of equilibrium for generalized games with non-ordered (that is, associated weak preference relations need not be complete and transitive) and mid-point continuous preference maps. 
	\end{abstract}
	{\bf Keywords:}
	Nash equilibrium problem; quasi-variational inequality; binary relations; non-ordered preference; generalized game\\
	{\bf Mathematics Subject Classification:}
	49J40, 49J53, 91B06, 91B42
	\section{Introduction}\label{intro}
	Arrow-Debreu \cite{debreu} initiated the notion of generalized Nash equilibrium problems (GNEP) consisting of finitely many players with their strategy sets in Euclidean spaces. Recently, several authors have studied these problems on infinite dimensional spaces motivated by dynamic games defined over a time interval and stochastic games consisting of infinite number of states of nature (see \cite{faraci,ausseltime} and references therein). Consider a set of players $\Lambda=\{1,2,\cdots N\}$ and finite a collection of Banach spaces $\{X_\nu|\,\nu\in \Lambda\}\}$. Suppose each $\nu\in \Lambda$ regulates a strategy variable $x_\nu\in C_\nu$ where $C_\nu\subseteq X_\nu$. Assume that, 
	\begin{equation}\label{X}
		C_{-\nu}=\prod_{\mu\in \Lambda\setminus\{\nu\}} C_\mu, C= \prod_{\nu\in \Lambda}C_\nu, X_{-\nu}=\prod_{\mu\in \Lambda\setminus\{\nu\}} X_\mu~\text{and}~ X=\prod_{\nu\in \Lambda}X_\nu.
	\end{equation} 
	Then, any vector $x_{-\nu}\in C_{-\nu}$ consists of strategies of players other then $\nu$ and we have $x=(x_\nu,x_{-\nu})\in C$. Suppose $u_\nu:X\rightarrow \mathbb{R}$ is objective function (or numerical representation of preferences) of player $\nu$.
	For a given strategy vector $x_{-\nu}$ of rival players, the player $\nu$ intends to find $x_\nu$ in the feasible strategy set $K_\nu(x_\nu,x_{-\nu})\subseteq C_\nu$ such that,
	\begin{equation}\label{GNEP2}
		u_\nu(x_\nu,x_{-\nu})=\max_{z_\nu\in K_\nu(x)} u_\nu(z_\nu,x_{-\nu}).
	\end{equation}
	Suppose $Sol_\nu(x_{-\nu})$ consists of all the vectors $x_\nu$ which satisfy (\ref{GNEP2}). Then, any vector $\bar x\in X$ is known as equilibrium for GNEP $\Upsilon=(X_\nu,K_\nu,u_\nu)_{\nu\in \Lambda}$ if $\bar x_\nu\in Sol_\nu(\bar x_{-\nu})$ for each $\nu\in \Lambda$ \cite{debreu,faccsurvey}.
	
	A preference relation of any individual is a binary relation which represents his choice in the set
	of available options. The preference relation of any individual satisfy the completeness and transitivity property if he is able to compare any pair of available options. Any complete, transitive and continuous preference relation defined on connected and separable topological space admits a numerical representation (see \cite[Theorem I]{decision_processes}). Hence, one can obtain the above well known version of GNEP with real-valued objective functions if preferences of players satisfy these properties. It is worth mentioning that for numerical representation of preference relations the completeness and transitivity properties are necessary (see \cite[Proposition 1.3]{krepsweak}). The fact that preference relations  are not always complete and transitive in real-world scenarios (see for e.g. \cite{aumann}) motivated Shafer-Sonnenschein \cite{shafer} to study generalized games in which 
	players' preference maps are non-ordered, that is, there are no restrictions like completeness and transitivity of players' preference relations. Thereafter, Yannelis- Prabhakar \cite{yannelis_infinite} extended the generalized games studied in \cite{shafer} to the case where strategy spaces are contained in topological vector spaces.  
	
	Let us recall the notion of generalized games studied in \cite{shafer,yannelis_infinite} which extends the GNEP $\Upsilon=(X_\nu,K_\nu,u_\nu)_{\nu\in \Lambda}$ to the case of non-ordered preferences. Suppose the strategy set $C_\nu$ of any player $\nu\in \Lambda$ is contained in Banach space $X_\nu$ same as GNEP $\Upsilon$ and $C_{-\nu},C,X_{-\nu},X$ are defined as (\ref{X}).  
	Let us denote generalized game (or abstract economy) by $\Gamma=(X_\nu,K_\nu,P_\nu)_{\nu\in \Lambda}$ where $P_\nu:X\rightrightarrows X_\nu$ and $K_\nu:C\rightrightarrows C_\nu$ are non-ordered preference and feasible strategy maps of player $\nu$, respectively. Suppose $\mathbb{S}(\Gamma)$ indicates the set of equilibrium points for generalized game $\Gamma$. Then by following \cite{shafer,yannelis_infinite}, we say $\bar x\in \mathbb{S}(\Gamma)$ if,
	\begin{equation}\label{GNEPshafer}
		\bar x_\nu\in K_\nu(\bar x)~\text{and}~ P_\nu(\bar x)\cap K_\nu(\bar x)=\emptyset~\text{for every}~ \nu\in \Lambda.
	\end{equation}
	
	Recently, the authors studied economic equilibrium problems with complete, transitive and continuous preference relations in \cite{milasipref} using tools of variational analysis. Thereafter, the maximization problem for incomplete and non-transitive preferences has been studied in \cite{milasipref2021,donato} by reformulating it as a variational inequality problem. In these articles, the principal operator of associated variational inequality is formed by using normal cones to preferred sets.  This motivated Beuno et. al. \cite{cotrina} to study existence of equilibrium for generalized games by employing variational approach. Further, Sultana-Valecha deduced the existence of equilibrium and projected solutions for generalized games with not necessarily bounded strategy maps \cite{shivani1} and non-self constraint maps \cite{shivani2}, respectively, by reformulating these games into appropriate quasi-variational inequality problems. However, the mentioned articles \cite{milasipref2021, donato,cotrina,shivani1,shivani2} are restricted to finite dimensional strategy spaces. We observe that the variational approach used in all these articles can not be administered on the generalized games $\Gamma=(X_\nu,K_\nu,P_\nu)_{\nu\in \Lambda}$ defined over infinite dimensional Banach spaces $X_\nu$ (see Remark \ref{infinite_remark}). 
	
	Our aim is to study the generalized game $\Gamma=(X_\nu,K_\nu,P_\nu)_{\nu\in \Lambda}$ defined over Banach spaces with non-ordered (that is, associated weak preference relations need not be complete and transitive) and mid-point continuous preference maps, by employing variational inequality theory. To reformulate generalized games as quasi-variational inequality problems, we will first form a suitable principal operator and study some significant properties of this operator. Then, we deduce the sufficient conditions under which an equilibrium for the generalized game can be obtained by solving an associated quasi-variational inequality. Based on this variational reformulation, we derive the existence of equilibrium for the proposed generalized games. 
	
	\section{Notations and Definitions}
	Suppose $X$ is a Banach space with topological dual $X^*$ and duality pairing $\langle\cdot,\cdot\rangle$. We denote unit ball in $X$ by $B_X=\{y\in X|\,\norm{y}\leq 1\}$ and unit ball in $X^*$ by $B_X^*=\{y\in X^*|\,\norm{y}_*\leq 1\}$, where $\norm{\cdot}_*$ is usually defined as $\norm{y}_*=\sup_{\norm{u}\leq 1}\langle y^*,u\rangle$ (see \cite{aliprantis}). For any $C\subseteq X$, we assume $co(C)$ and $\overline{C}$ denote convex hull and closure of a set $C$, respectively. A point $x\in C$ is known as internal point of $C$ if for any $u$ there exists $t_\circ>0$ such that $x+tu\in C$ for any $|t|<t_\circ$ (see \cite[Definition 5.58]{aliprantis}). It is easy to notice that any interior point is internal point. Suppose $P:C\rightrightarrows X$ is a multi-valued map, that is, $P(x)\subseteq X$ for any $x\in C$. The graph of map $P$ is given as $Gr(P)=\{(x,z)\in C\times X \,|\,z\in P(x)\}$. 
	The reader may refer \cite{aliprantis} for some important concepts of upper semi-continuous (u.s.c.), lower semi-continuous (l.s.c.) and closed multi-valued maps. 
	
	Let us recall the classical definitions of variational and quasi-variational inequality problems \cite{stampbook}. Suppose $T:X\rightrightarrows X^*$ is a multi-valued map and $C\subseteq X$ is non-empty. Let $K:C\rightrightarrows C$ be a multi-valued map. Then, the quasi-variational inequality problem $QVI(T,K)$ corresponds to find a $\bar y\in K(\bar y)$ so that,
	\begin{equation*}
		\text{there exists}~\bar y^*\in T(\bar y)~\text{satisfying}~ \langle \bar y^*,y-\bar y\rangle\geq 0,~\text{for all}~y\in K(\bar y).
	\end{equation*}
	If the constraint map $K$ is constant, that is, $K(y)=C$ for all $y\in C$ then $QVI(T,K)$ reduces to a variational inequality problem $VI(T,C)$.
	
	

Let $X$ be a Banach space with topological dual $X^*$. A preference relation on $X$ is a binary relation denoted by $\succeq$. As per \cite[Chapter 1]{krepsweak}, $\succeq$ is known as, 
\begin{itemize}
	\item[-] \textit{reflexive} if $y\succeq y$ for any $y\in X$;
	\item[-] \textit{complete} if for any $y,z\in X$ we have $y\succeq z$ or $z\succeq y$ or both;
	\item[-] \textit{transitive} if for any $y,z,w \in X$ we have $y\succeq w$ and $w\succeq z$, then $y\succeq z$;
	\item[-] \textit{continuous} if the sets $\{z\in X\,|\, z\succeq y\}$ and $\{z\in X\,|\, y\succeq z\}$ are closed for any $y\in X$;
\end{itemize}
Any preference relation $\succeq$ defined on $X$ induces a strict preference relation $\succ$ as follows: For any pair $y,z\in X$, it is known that $y\succ z$ iff $y\succeq z$ and $z\nsucceq y$. A vector $\bar x\in K$ is known as maximal element of $K\subseteq X$ with respect to $\succeq$ if there no vector $y\in K$ such that $y\succ \bar x$. 
A real-valued function $u:X\rightarrow \mathbb{R}$ is known as numerical representation of $\succeq$ if for any pair $y,z\in X, y\succeq z$ iff $u(y)\geq u(z)$. If $X$ is connected and separable topological space and $\succeq$ is complete, transitive and continuous, then $\succeq$ admits a numerical representation (see \cite[Theorem I]{decision_processes}) and problem of finding maximal element reduces to classical optimization problem. On the other hand, a preference relation $\succeq$ admits a numerical representation only if it is complete and transitive \cite[Proposition 1.3]{krepsweak}. 
However, the completeness and transitivity of $\succeq$ are not feasible in real world situations (refer \cite{aumann}). Hence, we consider a non-ordered preference map $P:X\rightrightarrows X$ corresponding to preference relation $\succeq$ which need not be complete and transitive, 
\begin{equation}\label{preference}
	P(x)=\{y\in X|\,y\succeq x~\text{and}~ x\nsucceq y\}=\{y\in X|\,y\succ x\}.
\end{equation}
Now the notion of maximal elements can be given as (see \cite{donato,yannelis_infinite}),
\begin{definition}\label{maximal_element}
	Suppose $K$ is non-empty subset of Banach space $X$. Any $\bar x\in K$ is \textit{maximal} element for the preference map $P$ over the set $K$ if $P(\bar x)\cap K\neq \emptyset$. We denote the set of maximal elements for preference $P$ over set $K$ by  $\mathbb{S}(P,K)$. 
\end{definition}

\section{Mid-point continuity for Preference Maps}
We define the concept of mid-point continuity for non-ordered preference maps motivated by \cite{gorno}, which generalizes the concept of continuity for preference relations studied in \cite[Definition 2]{milasipref2021}:
\begin{definition}\label{midpoint}
	Let $X$ be a Banach space. A preference map $P:X\rightrightarrows X$ fulfils,
	\begin{itemize}
		\item[-] \textit{lower mid-point continuity} at any $x\in X$ if for any $w\in {P}(x)$, there exists $t \in [0,1)$ and open set $V$ in $X$ such that $t x+(1-t) w\in V$ and $V\subset {P}(x)$;
		\item[-] \textit{upper mid-point continuity} at any $x\in X$ if for any $w\in P(x)$, there exists $t \in [0,1)$ and open set $W$ in $X$ such that $t x+(1-t) w\in {P}(x')$ for each $x'\in W$;
		\item[-] \textit{mid-point continuity} \cite[Definition 1]{gorno} at any $x\in X$ if it is upper and lower mid-point continuous at $x$, that is, for any $w\in {P}(x)$, there exists $t \in [0,1)$ and open sets $V$ and $W$ in $X$ such that
		\begin{align*}
			t x+(1-t) w\in V, x\in W ~\text{and}~w'\in {P}(x')~\text{for all}~(w',x')\in V\times W.
		\end{align*}
	\end{itemize}
\end{definition}
Let $Y$ be a Banach space and $(x,y)\in X\times Y$ be arbitrary. We say $P:X\times Y\rightrightarrows X$ is lower mid-point continuous w.r.t. $x$ at $(x,y)$ if for any $w\in {P}(x,y)$, there exists $t \in [0,1)$ and open set $V$ in $X$ such that $t x+(1-t) w\in V$ and $V\subset {P}(x,y)$. Furthermore, we say $P$ is upper mid-point continuous w.r.t. $x$ at $(x,y)$ if for any $w\in {P}(x,y)$ there exists $t \in [0,1)$ and open sets $W$ in $X$ and $O$ in $Y$ such that  $t x+(1-t) w\in {P}(x',y')$ for each $(x',y')\in W\times O$. Finally, we say $P$ is mid-point continuous w.r.t. $X$ if it is mid-point continuous w.r.t. $x$ for all $(x,y)\in X\times Y$.

In \cite{milasipref2021,milasipref}, the maximization of preference is studied through variational inequalities under the continuity of preference relation $\succ$ (see \cite[Theorem 5]{milasipref2021}). The notion of mid-point continuity of preference maps is weaker than existing concept of continuity of preference relations $\succ$ studied in \cite[Definition 2.3]{milasipref} and \cite[Definition 2]{milasipref2021}. In fact, it is easy to observe that if preference relation $\succ$ is lower (or upper) semi-continuous over $X$ then $P:X\rightrightarrows X$ defined as (\ref{preference}) is lower (or upper) mid-point continuous, respectively. But, the converse need not be true as shown in following example.
\begin{example}\label{ex1}
	Suppose $\succ$ is defined on $[0,1]$ as $x\succ y$ iff $u(x)> u(y)$ where,
	\begin{equation}
		u(x)=\begin{cases}
			x,~&\text{if}~x<\frac{1}{2}\\
			2-x,~&\text{otherwise}.
		\end{cases}
	\end{equation}
	Then, clearly $\frac{1}{2}\succ 1$ but there no open set $V$ such that $\frac{1}{2}\in V$ and $x'\succ 1$ for each $x'\in V$. Hence, $\succ$ is not lower semi-continuous. But it is easy to observe that the corresponding preference map $P:[0,1]\rightrightarrows [0,1]$ given as,
	\begin{align*}
		P(x)=\begin{cases}
			(x,1],&~\text{if}~x\in [0,\frac{1}{2})\\
			\emptyset,&~\text{if}~x=\frac{1}{2}\\
			[\frac{1}{2},x),&~\text{if}~x\in (\frac{1}{2},1].
		\end{cases}
	\end{align*}
	is mid-point continuous at any $x\in [0,1]$. 
\end{example}

The generalized games are studied in finite dimensional spaces through variational reformulation in \cite{cotrina} under the assumption that preference maps are lower semi-continuous with open convex values. Furthermore, the maximization of preference maps is studied in finite dimensional spaces through variational reformulation in \cite{donato} under the lower semi-continuity and openness like assumptions on preference maps. Following result along with Example \ref{ex1} and \ref{ex2} shows that 
mid-point continuity is weaker than the assumptions in \cite{cotrina,donato}. 
\begin{proposition}\label{proplsc}
	Suppose $Z\subseteq \mathbb{R}^m$ is non-empty. A map $P:Z\rightrightarrows Z$ is:
	\begin{itemize}
		\item[(a)] lower mid-point continuous at $x$ if $P(x)$ is open in $Z$;
		\item[(b)] lower mid-point continuous at $x$ if $P(x)$ is convex and any $y\in P(x)$ is an internal point with respect to $Z$; 
		\item[(c)] upper mid-point continuous at $x$ if it is l.s.c. with convex and open values in $Z$;
		\item[(d)] upper mid-point continuous at $x$ if $P$ is l.s.c. at $x$, $P(x)$ is convex and any $y\in P(x)$ is an internal point with respect to $Z$.
	\end{itemize} 
\end{proposition}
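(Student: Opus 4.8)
The plan is to verify each of the four claims directly from Definition \ref{midpoint}, exploiting the finite dimensionality of $Z\subseteq\mathbb R^m$ to pass between "internal point" and "interior point" (relative to the affine hull of $P(x)$, or to $Z$). In each case the basic strategy is the same: fix $x\in Z$ and an arbitrary $w\in P(x)$, and then produce the required scalar $t\in[0,1)$ together with the required open set(s). Throughout, the natural choice will be $t=0$, so that the convex combination $tx+(1-t)w$ is simply $w$ itself; the work is then to show $w$ sits inside an open subset of $P(x)$ (for lower mid-point continuity) or that $w$ persists in $P(x')$ for nearby $x'$ (for upper mid-point continuity).

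For (a), with $t=0$ take $V=P(x)$; since $P(x)$ is open in $Z$ and $w\in V\subset P(x)$, lower mid-point continuity at $x$ holds immediately. For (b), the point is that in $\mathbb R^m$ a convex set all of whose points are internal (relative to $Z$) has nonempty relative interior and in fact every point is a relative interior point, so one can again take $t=0$ and let $V$ be a relatively open neighbourhood of $w$ contained in $P(x)$; here I would invoke the standard fact that for a convex subset of $\mathbb R^m$, internal points coincide with relative-interior points, which is where finite dimensionality is essential. For (c), combine (a) with lower semicontinuity: since $w\in P(x)$ and $P$ is l.s.c.\ at $x$, there is an open $W\ni x$ with $w\in P(x')$ for all $x'\in W$ (shrinking $W$ using l.s.c.\ applied to the open set $P(x)$, or more directly to a small ball around $w$), and taking $t=0$ gives $tx+(1-t)w=w\in P(x')$ for all $x'\in W$, which is exactly upper mid-point continuity. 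For (d), the argument is the same as (c) except that openness of $P(x)$ is replaced by the convex-plus-internal-points hypothesis used as in (b) to guarantee that l.s.c.\ can be applied around $w$; formally one uses that $w$ is a relative interior point of $P(x)$, picks a small relatively open ball $B$ around $w$ inside $P(x)$, applies l.s.c.\ at $x$ to obtain $W\ni x$ open with $P(x')\cap B\neq\emptyset$ — but to conclude $w\in P(x')$ one instead applies l.s.c.\ directly at the point $w$, i.e.\ uses that $w\in P(x)$ and $P$ l.s.c.\ at $x$ yields a neighbourhood $W$ of $x$ with $w\in\overline{P(x')}$ and then upgrades using that $w$ is interior — so some care is needed here.

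The main obstacle I anticipate is precisely this last point in parts (b) and (d): the definition of lower semicontinuity only gives points of $P(x')$ close to $w$, not $w$ itself, so one cannot naively conclude $w\in P(x')$. The clean fix is to not insist on the value $w$ but to use the freedom in choosing $t$: since $w$ is internal (hence, in $\mathbb R^m$, a relative interior point), there is a genuinely open-in-$Z$ ball $B\subset P(x)$ around $w$, and one can still take $t=0$ because what upper mid-point continuity requires is $tx+(1-t)w=w\in P(x')$ — so the real resolution is to observe that l.s.c.\ of $P$ at $x$ together with $w$ being an interior point of $P(x)$ does force $w\in P(x')$ for $x'$ near $x$: indeed l.s.c.\ gives a point $y'\in P(x')\cap B$, but this alone is insufficient, so instead one should apply l.s.c.\ with respect to the open neighbourhood $\{w\}$ is not open — hence the correct route is to use that for finite-dimensional targets one may choose $t$ slightly positive so that $tx+(1-t)w$ lies in the interior of $P(x)$ and at a definite distance from the boundary, and then a separate boundedness/uniformity argument across a small $W$ closes the gap. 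I would structure the proof of (d) around this uniform-interior observation, and expect it to be the most delicate verification; parts (a) and (c) are essentially immediate.
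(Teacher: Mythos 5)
Your parts (a) and (b) are essentially the paper's argument: take $t=0$ and $V=P(x)$ for (a), and for (b) reduce to (a) by the finite-dimensional fact that a convex set all of whose points are internal is open (the paper cites \cite[Lemma 5.60]{aliprantis}). One imprecision in your (b): internal points of a convex subset of $\mathbb{R}^m$ coincide with \emph{interior} points, not with \emph{relative-interior} points (a segment in $\mathbb{R}^2$ has relative interior but no internal points); you need genuine openness of $P(x)$, since the definition demands an open set $V$, and a set that is only relatively open in the affine hull of $P(x)$ would not suffice. Also, the difficulty you flag at the end concerns (c) and (d), not (b): lower mid-point continuity involves no variation of the argument $x$, so lower semicontinuity plays no role there.

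The genuine gap is in (c) and (d). You correctly identify the crux --- lower semicontinuity only yields points of $P(x')$ \emph{near} $w$, not $w$ itself --- but your proposed fix does not close it. Moving to $t>0$ so that $tx+(1-t)w$ lies at positive distance from the boundary of $P(x)$ changes nothing: l.s.c.\ still only produces points of $P(x')$ in any prescribed neighbourhood of that target, and no ``boundedness/uniformity argument'' by itself upgrades this to membership. The missing ingredient is the convexity of the \emph{values} $P(x')$ for $x'$ near $x$, combined with finite dimensionality: choose $m+1$ affinely independent points $w_0,\dots,w_m$ in a ball around the target inside $P(x)$ whose simplex contains the target in its interior; l.s.c.\ gives, for $x'$ in a neighbourhood $W$ of $x$, points $w_i'\in P(x')$ with $w_i'$ close to $w_i$; for small enough perturbations the target still lies in the convex hull of the $w_i'$, hence in $P(x')$ by convexity. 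This is precisely the content of the result the paper invokes at this step (\cite[Proposition 1]{zhou}), after which (c) follows with $t=0$ and (d) reduces to (c) via \cite[Lemma 5.60]{aliprantis}. Without either citing such a lemma or carrying out the simplex argument, your proof of (c) and (d) is incomplete.
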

\begin{proof}
	To prove (a), suppose $P$ is open valued in $Z$. Then, for any $x\in Z$ the lower mid-point continuity holds by assuming $t=0$. Further, we notice that $P(x)$ is open if it satisfies the assumptions given in (b) (see \cite[Lemma 5.60]{aliprantis}). Hence, lower mid-point continuity of $P$ follows by (a). For (c), we assume $P$ is l.s.c. with convex and open values in $Z$. Then, for any $x\in Z$ the upper mid-point continuity holds by employing \cite[Proposition 1]{zhou} and assuming $t=0$. Again, the proof of (d) follows from (c) and \cite[Lemma 5.60]{aliprantis}.
\end{proof}
\begin{remark}
	We observe that the converse of the statements in Proposition \ref{proplsc} need not be true. In fact, it is clear from Example \ref{ex1} that $P$ is lower mid-point continuous but it does not admit open values in $[0,1]$. Further, in following example $P$ is nor lower semi-continuous neither it admits open values but it is upper mid-point continuous for any $x\in [0,1]$.
\end{remark}
\begin{example} \label{ex2}
	Suppose $P:[0,1]\rightrightarrows[0,1]$ is defined as,
	\begin{align*}
		P(x)=\begin{cases}
			(x,1],&~\text{if}~x\in [0,\frac{1}{2}]\\
			[\frac{1}{2},\frac{3}{4}],&~\text{if}~x\in (\frac{1}{2},1].
		\end{cases}
	\end{align*}
	Clearly, $P$ is not l.s.c. at $x=\frac{1}{2}$ and it is not even open valued but it is upper mid-point continuous for any $x\in [0,1]$.
\end{example}

\section{Normal Cone Operators for Preference Maps}
In this section, we derive some important properties of normal cone operators corresponding to non-ordered preference map, which will be later employed to study preference maximization problem and generalized games through variational reformulation.

According to \cite{ausselnormal}, the normal cone of the set $C$ at some point $x\in X$ is
\begin{align}
	N_C(x)=
	\begin{cases}
		\{x^*\in X^*|\,\langle x^*,y-x\rangle \leq 0~\text{for all}~y\in C\},&\text{if}~C\neq\emptyset\\
		X^*,&\text{otherwise}.
	\end{cases}
\end{align}


For Banach spaces $X$ and $Y$, consider a set-valued map $P:X\times Y\rightrightarrows X$. We define a normal cone operator $N_{P}:X\times Y\rightrightarrows X^*$ as
$N_{P}(x,y)= N_{P(x,y)} (x)=(P(x,y)-\{x\})^\circ,$ that is, 
\begin{align}\label{normal}
	N_{P}(x,y)=
	\begin{cases}
		\{x^*|\,\langle x^*,z-x\rangle \leq 0~\forall\,z\in P(x,y)\},&\text{if}~{P}(x,y)\neq\emptyset\\
		X^*,&\text{otherwise}.		
	\end{cases}	
\end{align}



We derive some properties related to normal cone operators defined as (\ref{normal}). 
\begin{proposition}\label{nonempty}
	Suppose $N_P:X\times Y\rightrightarrows X^*$ is defined as (\ref{normal}). If the map $P$ is convex valued and lower mid-point continuous w.r.t. $x$ at $(x,y)$ and $x\notin P(x,y)$ then $(N_P(x,y)\setminus \{0\})\neq \emptyset$ for any $(x,y)\in X\times Y$.
\end{proposition}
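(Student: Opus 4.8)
The plan is to produce a nonzero functional in $N_P(x,y)$ by separating the point $x$ from the set $P(x,y)$, using lower mid-point continuity to guarantee that $P(x,y)$ has nonempty interior around a suitable point and hence that a genuine (nonzero) separating functional exists. First I would handle the trivial case: if $P(x,y)=\emptyset$, then $N_P(x,y)=X^*$ by the definition in (\ref{normal}), so it certainly contains a nonzero element (assuming $X\neq\{0\}$, which is implicit). So assume $P(x,y)\neq\emptyset$ and fix any $w\in P(x,y)$.

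Next I would invoke lower mid-point continuity of $P$ w.r.t. $x$ at $(x,y)$: there exist $t\in[0,1)$ and an open set $V\subseteq X$ with $v_0 := tx+(1-t)w\in V$ and $V\subset P(x,y)$. Thus $P(x,y)$ has nonempty interior, with $v_0$ an interior point. Now I would separate: since $x\notin P(x,y)$ and $P(x,y)$ is convex with nonempty interior, the geometric Hahn–Banach (separation) theorem yields a nonzero $x^*\in X^*$ such that $\langle x^*, z-x\rangle \le 0$ for all $z\in P(x,y)$ — i.e., $x^*\in N_P(x,y)\setminus\{0\}$. Concretely one separates the convex set $P(x,y)$ (or its interior) from the singleton $\{x\}$; because the interior is nonempty the separating functional can be taken nonzero, and the closed-halfspace inequality extends from the interior to all of $P(x,y)$ by continuity, giving exactly the defining inequality of $N_P(x,y)$.

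The one point requiring a little care — and the main obstacle — is making sure the separation actually produces a \emph{nonzero} functional and that the correct sign/orientation is obtained. In infinite dimensions, separating a point from a convex set can give the trivial functional if the set has empty interior; here lower mid-point continuity is precisely the hypothesis that supplies an interior point $v_0\in P(x,y)$, so the supporting-hyperplane form of Hahn–Banach (e.g. separating the open convex set $\mathrm{int}\,P(x,y)$ from $\{x\}$, noting $x\notin \mathrm{int}\,P(x,y)$ since $x\notin P(x,y)$) delivers a nonzero $x^*$ with $\langle x^*,v-x\rangle\le 0$ for all $v\in\mathrm{int}\,P(x,y)$. I would then argue density/continuity: every $z\in P(x,y)$ is a limit of points on the segment from $v_0$ to $z$ which lie in $\mathrm{int}\,P(x,y)$ (by convexity, the half-open segment $[v_0,z)$ is in the interior), so the inequality passes to all of $P(x,y)$. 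This shows $x^*\in N_P(x,y)$, and since $x^*\neq 0$ we conclude $N_P(x,y)\setminus\{0\}\neq\emptyset$, as claimed.
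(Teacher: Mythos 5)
Your proposal is correct and follows essentially the same route as the paper: dispose of the case $P(x,y)=\emptyset$ via $N_P(x,y)=X^*$, use lower mid-point continuity to exhibit an interior point $tx+(1-t)w$ of the convex set $P(x,y)$, and then separate $x\notin P(x,y)$ from $P(x,y)$ by the Hahn--Banach separation theorem (the paper cites \cite[Theorem 5.67]{aliprantis}) to obtain a nonzero $x^*\in N_P(x,y)$. Your extra care about the nonvanishing of the separating functional and the passage from the interior to all of $P(x,y)$ is sound and only makes explicit what the cited separation theorem already delivers.
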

\begin{proof}
	In the case $P(x,y)=\emptyset$ then we have $N_P(x,y)=X^*$ and $N_P(x,y)\setminus \{0\}\neq \emptyset$. Suppose $P(x,y)\neq \emptyset$ and $w\in P(x,y)$. Then, by lower mid-point continuity of $P$ there exists $t\in [0,1)$ and open set $V$ such that $tx+(1-t)w\in V\subset P(x,y)$. This shows $P(x,y)$ contains an interior point $tx+(1-t)w$. As $x\notin P(x,y)$, we obtain $0\neq x^*\in X^*$ by virtue of \cite[Theorem 5.67]{aliprantis} such that
	\begin{equation}
		\sup_{w\in P(x)}\langle x^*,w\rangle \leq \langle x^*,x\rangle.
	\end{equation}
	This proves $N_P(x,y)\setminus \{0\}\neq \emptyset$.
\end{proof}
\begin{proposition}\label{intersection}
	Suppose $N_P:X\times Y\rightrightarrows X^*$ is defined as (\ref{normal}). If $P$ is lower mid-point continuous w.r.t. $x$ at $(x,y)$ and $P(x,y)\neq \emptyset$ then,
	\begin{equation}
		N_{P}(x,y)\cap -N_{P}(x,y)=\{0\}.
	\end{equation}
\end{proposition}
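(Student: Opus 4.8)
The plan is to unwind what membership in $N_{P}(x,y)\cap-N_{P}(x,y)$ actually means, observe that it reduces to the set of functionals annihilating $P(x,y)-\{x\}$, and then exploit the fact that lower mid-point continuity forces $P(x,y)$ to contain a nonempty open subset of $X$, on which such a functional must vanish identically.

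First I would record that, since $P(x,y)\neq\emptyset$, the definition (\ref{normal}) gives $N_{P}(x,y)=\{x^*\in X^*\,:\,\langle x^*,z-x\rangle\leq 0\ \text{for all}\ z\in P(x,y)\}$, and therefore
\[
N_{P}(x,y)\cap-N_{P}(x,y)=\{x^*\in X^*\,:\,\langle x^*,z-x\rangle=0\ \text{for all}\ z\in P(x,y)\}.
\]
Clearly $0$ lies in this set, so it remains to prove the reverse inclusion: if $x^*$ annihilates $P(x,y)-\{x\}$, then $x^*=0$.

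Next, fix any $w\in P(x,y)$, which is possible because $P(x,y)\neq\emptyset$. Lower mid-point continuity of $P$ w.r.t. $x$ at $(x,y)$ yields $t\in[0,1)$ and an open set $V\subseteq X$ with $v_{0}:=tx+(1-t)w\in V\subseteq P(x,y)$. Since $V$ is open in $X$ and contains $v_{0}$, there is $r>0$ with $v_{0}+rB_{X}\subseteq V\subseteq P(x,y)$. For an arbitrary $u\in B_{X}$, both $v_{0}$ and $v_{0}+ru$ belong to $P(x,y)$, so $\langle x^*,v_{0}-x\rangle=0=\langle x^*,v_{0}+ru-x\rangle$; subtracting gives $r\langle x^*,u\rangle=0$, i.e. $\langle x^*,u\rangle=0$. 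Since $u\in B_{X}$ was arbitrary, homogeneity yields $\langle x^*,u\rangle=0$ for every $u\in X$, hence $x^*=0$. This establishes the claimed equality.

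The argument is essentially routine once the right object is identified; the only point requiring care is the passage from lower mid-point continuity to the existence of a genuine open ball contained in $P(x,y)$ — this is precisely where the hypothesis is used, and the rest is an elementary computation with the duality pairing. Note in particular that, in contrast with Proposition \ref{nonempty}, neither convexity of $P$ nor the condition $x\notin P(x,y)$ is needed here.
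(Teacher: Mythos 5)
Your proof is correct and follows essentially the same route as the paper: identify $N_{P}(x,y)\cap -N_{P}(x,y)$ with the annihilator of $P(x,y)-\{x\}$, then use lower mid-point continuity to produce an open ball inside $P(x,y)$ on which any such functional must vanish, forcing it to be zero. The only difference is that you spell out explicitly the elementary step (a functional annihilating a set with nonempty interior is zero) that the paper leaves implicit.
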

\begin{proof}
	We observe that, $${N}_{P}(x,y)\cap -{N}_{P}(x,y)=(P(x,y)-\{x\})^\perp=\{x^*|\,\langle x^*,w-x\rangle =0,~\forall~w\in P(x,y)\}.$$ 
	Suppose $w\in P(x,y)$, then we have $t \in [0,1)$ and open set $V$ such that $t x+(1-t) w\in V\subset P(x,y)$ by lower mid-point continuity. This implies $int (P(x,y)-\{x\})\neq \emptyset$. Thus, $(P(x,y)-\{x\})^\perp=\{0\}$.
\end{proof}
\begin{proposition} \label{Tusc}
	Suppose $P:X\times Y\rightrightarrows X$ is upper mid-point continuous map with respect to $x$ at $(x,y)$. 
	For $C\subset X\times Y$, let $T:C\rightrightarrows X^*$ be defined as $T(x,y)=N_P(x,y)\cap H$, where $H$ is weak$^*$ closed set in $X^*$. Then, the map $T$ is norm-weak$^*$ closed at $(x,y)$ if $T(C)\subset B_X^*$.
\end{proposition}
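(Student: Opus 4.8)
The plan is to unwind the definition of norm--weak$^*$ closedness at $(x,y)$ and verify it by a direct limiting argument. I would take a net $\bigl((x_\alpha,y_\alpha),x_\alpha^*\bigr)_\alpha$ in $Gr(T)$ with $(x_\alpha,y_\alpha)\to (x,y)$ in the norm topology of $X\times Y$ and $x_\alpha^*\to x^*$ in the weak$^*$ topology of $X^*$, and show that $x^*\in T(x,y)=N_P(x,y)\cap H$. One half is for free: every $x_\alpha^*$ lies in $H$, and $H$ is weak$^*$ closed, so $x^*\in H$; thus the whole problem reduces to showing $x^*\in N_P(x,y)$. (One should use nets here in general, since the weak$^*$ topology need not be metrizable; the argument is unchanged if one works with sequences instead.)

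To obtain $x^*\in N_P(x,y)$ I would first discard the trivial case $P(x,y)=\emptyset$, in which $N_P(x,y)=X^*$ by (\ref{normal}). So assume $P(x,y)\neq\emptyset$ and fix an arbitrary $w\in P(x,y)$; the target inequality is $\langle x^*,w-x\rangle\le 0$. This is where upper mid-point continuity w.r.t. $x$ at $(x,y)$ enters: it yields $t\in[0,1)$ and open neighbourhoods $W$ of $x$ in $X$ and $O$ of $y$ in $Y$ such that the single point $z_0:=tx+(1-t)w$ satisfies $z_0\in P(x',y')$ for \emph{every} $(x',y')\in W\times O$. Since $(x_\alpha,y_\alpha)\to(x,y)$ in norm, eventually $(x_\alpha,y_\alpha)\in W\times O$, hence eventually $z_0\in P(x_\alpha,y_\alpha)$; plugging $z_0$ into the defining inequality of $x_\alpha^*\in N_P(x_\alpha,y_\alpha)$ gives $\langle x_\alpha^*,z_0-x_\alpha\rangle\le 0$ along the tail of the net.

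Passing to the limit in this last inequality is the step I expect to be the main obstacle, because \emph{both} arguments of the duality pairing depend on $\alpha$. I would handle it by writing $\langle x_\alpha^*,z_0-x_\alpha\rangle=\langle x_\alpha^*,z_0\rangle-\langle x_\alpha^*,x\rangle-\langle x_\alpha^*,x_\alpha-x\rangle$. The first two terms converge to $\langle x^*,z_0\rangle$ and $\langle x^*,x\rangle$ by weak$^*$ convergence of $(x_\alpha^*)$; for the third, the hypothesis $T(C)\subset B_X^*$ gives $\norm{x_\alpha^*}_*\le 1$, so $|\langle x_\alpha^*,x_\alpha-x\rangle|\le \norm{x_\alpha-x}\to 0$. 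Hence $\langle x_\alpha^*,z_0-x_\alpha\rangle\to\langle x^*,z_0-x\rangle$, and the inequality survives in the limit: $\langle x^*,z_0-x\rangle\le 0$, i.e.\ $(1-t)\langle x^*,w-x\rangle\le 0$, which forces $\langle x^*,w-x\rangle\le 0$ since $1-t>0$.

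As $w\in P(x,y)$ was arbitrary, this shows $x^*\in N_P(x,y)$, and together with $x^*\in H$ we conclude $x^*\in T(x,y)$, so $T$ is norm--weak$^*$ closed at $(x,y)$. I would also point out in the write-up that the assumption $T(C)\subset B_X^*$ is genuinely needed and not cosmetic: it is exactly what controls the cross term $\langle x_\alpha^*,x_\alpha-x\rangle$ in the limit passage, and without a norm bound on the multipliers $x_\alpha^*$ the closedness can break down.
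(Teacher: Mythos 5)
Your proof is correct and follows essentially the same route as the paper: reduce to showing $x^*\in N_P(x,y)$, use upper mid-point continuity to place the single point $tx+(1-t)w$ in $P(x_\alpha,y_\alpha)$ along a tail of the net, and pass to the limit in $\langle x_\alpha^*,tx+(1-t)w-x_\alpha\rangle\le 0$. The only difference is presentational: where the paper invokes joint continuity of the evaluation pairing on $B_X^*\times X$, you verify that step by hand via the three-term decomposition and the bound $\norm{x_\alpha^*}_*\le 1$, which makes the role of the hypothesis $T(C)\subset B_X^*$ more explicit.
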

\begin{proof}
	Consider a net $(x_\beta,y_\beta, x_\beta^*)_{\beta}\subseteq Gr(T)$ such that $(x_\beta,y_\beta)_\beta$ converges to $(x,y)$ with respect to norm topology and $x_\beta^*$ converges to $x^*$ with respect to weak$^*$ topology. We aim to show that $x^*\in T(x,y)$. Since $(x_\beta^*)_{\beta}\subseteq H$ weak$^*$ converges to $x^*$ and $H$ is weak$^*$ closed set, we have $x^*\in H$. It remains to show that $x^*\in N_P(x,y)$, that is,
	\begin{equation}\label{normal_lsc}
		\langle x^*,z-x\rangle\leq 0,~\text{for each}~z\in P(x,y).
	\end{equation}
	As we know $x_\beta^*\in  N_P(x_\beta)$, it appears
	\begin{equation}\label{normal_lsc_2}
		\langle x_\beta^*,z-x_\beta \rangle\leq 0~\text{for all}~z\in P(x_\beta,y_\beta).
	\end{equation} 
	To prove (\ref{normal_lsc}), we assume $z\in P(x,y)$ is arbitrary. Since $P:X\times Y\rightrightarrows X$ is upper mid-point continuous w.r.t. $x$ at $(x,y)$, we obtain $t\in [0,1)$ and a subnet $(x_\gamma,y_\gamma)_{\gamma}\subseteq (x_\beta,y_\beta)_{\beta}$ such that $tx+(1-t) z\in P(x_\gamma,y_\gamma)$ for each $\gamma$. As per (\ref{normal_lsc_2}), we have
	\begin{equation}\label{normal_lsc3}
		\langle x_\gamma^*,tx+(1-t) z-x_\gamma \rangle\leq 0.
	\end{equation}
	Since the evaluation $\langle\cdot,\cdot\rangle$ restricted to $B_X^*\times X$ is jointly continuous and $(x^*_\gamma)_\gamma\subseteq B_X^*$, we observe (\ref{normal_lsc}) holds by taking limits in (\ref{normal_lsc3}).
\end{proof}

In \cite[Theorem 2.5]{giuli} Castellani-Giuli derived the presence of norm-weak$^*$ u.s.c. compact convex valued sub-map for an adjusted normal operator corresponding to lower semi-continuous real valued function. In following result, we show that a sub-map with similar properties exists for normal cone operators corresponding to non-ordered preference maps motivated by the approach in \cite{giuli}.
\begin{proposition}\label{normalmap}
	Suppose $P:X\times Y\rightrightarrows X$ is convex valued mid-point continuous map w.r.t. to $X$ and $x\notin P(x,y)$ for any $(x,y)\in X\times Y$. Then, there exists a norm-weak$^*$ upper semi-continuous map $F:X\times Y\rightrightarrows X^*$ with non-empty convex weak$^*$ compact values such that $F(x,y)\subseteq N_P(x,y)\setminus \{0\}$ whenever $P(x,y)\neq \emptyset$.
\end{proposition}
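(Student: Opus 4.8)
The plan is to follow the blueprint of the adjusted normal operator of Castellani--Giuli \cite[Theorem 2.5]{giuli}: I will define $F$ as a suitably truncated \emph{base} of the normal cone $N_P$ on the region where $P$ is non-empty, and as the whole ball $B_X^*$ elsewhere, and then read off the three required features from Proposition \ref{nonempty} (non-emptiness of $N_P\setminus\{0\}$), Banach--Alaoglu (weak$^*$-compactness), and Proposition \ref{Tusc} together with upper mid-point continuity (norm--weak$^*$ upper semicontinuity). Throughout I will arrange that $F(x,y)\subseteq B_X^*$ for every $(x,y)$, which makes upper semicontinuity at the points where $P(x,y)=\emptyset$ (there I put $F(x,y)=B_X^*$) completely automatic, since every weak$^*$-open set containing $B_X^*$ trivially contains all the values of $F$.

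On the set $\{(x,y):P(x,y)\neq\emptyset\}$ I proceed as follows. Lower mid-point continuity w.r.t.\ $x$ supplies $t\in[0,1)$, $w\in P(x,y)$ and an open set $V$ with $w_0:=tx+(1-t)w\in V\subseteq P(x,y)$, so $w_0$ is an interior point of the convex set $P(x,y)$, say $w_0+\varepsilon(x,y)B_X\subseteq P(x,y)$ with $\varepsilon(x,y)>0$. A one-line computation (test $x^*$ against $w_0+\varepsilon(x,y)u-x$ over $\norm{u}\le 1$) then gives the quantitative separation $\langle x^*,x-w_0\rangle\ge\varepsilon(x,y)\norm{x^*}_*$ for all $x^*\in N_P(x,y)$. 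I will set
\[
F(x,y):=\bigl\{x^*\in N_P(x,y):\ \norm{x^*}_*\le 1,\ \langle x^*,x-w_0(x,y)\rangle\ge\varepsilon(x,y)\bigr\}.
\]
This set is convex (intersection of the convex cone $N_P(x,y)$ with a ball and a closed half-space), bounded (contained in $B_X^*$) and weak$^*$-closed (the norm is weak$^*$-l.s.c.\ and $\langle\cdot,x-w_0\rangle$ is weak$^*$-continuous), hence weak$^*$-compact; it omits $0$ because $\varepsilon(x,y)>0$; and it is non-empty, since Proposition \ref{nonempty} (applicable because $P$ is convex valued and lower mid-point continuous w.r.t.\ $x$ and $x\notin P(x,y)$) yields $x_0^*\in N_P(x,y)$ which, after normalising so that $\norm{x_0^*}_*=1$, satisfies the half-space constraint by the separation inequality above. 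In particular $F(x,y)\subseteq N_P(x,y)\setminus\{0\}$, as required.

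For upper semicontinuity I will show that $Gr(F)$ is closed in the product of the norm topology on $X\times Y$ and the weak$^*$ topology on $B_X^*$; since $B_X^*$ is weak$^*$-compact this yields norm--weak$^*$ u.s.c. Let $(x_\beta,y_\beta,x_\beta^*)_\beta\subseteq Gr(F)$ with $(x_\beta,y_\beta)\to(x,y)$ in norm and $x_\beta^*\to x^*$ weak$^*$. If $P(x,y)=\emptyset$ there is nothing to check, since $x^*\in B_X^*=F(x,y)$. If $P(x,y)\neq\emptyset$, upper mid-point continuity first forces $P(x_\beta,y_\beta)\neq\emptyset$ eventually, so the tail of the net lies in the form above; then, reasoning exactly as in Proposition \ref{Tusc} (with $H=B_X^*$, using the joint continuity of $\langle\cdot,\cdot\rangle$ on $B_X^*\times X$ and extracting, for each fixed $z\in P(x,y)$, a subnet along which $tx+(1-t)z\in P(x_\beta,y_\beta)$), I get $x^*\in N_P(x,y)$; weak$^*$-lower semicontinuity of the norm gives $\norm{x^*}_*\le 1$; and passing to the limit in $\langle x_\beta^*,x_\beta-w_0(x_\beta,y_\beta)\rangle\ge\varepsilon(x_\beta,y_\beta)$ gives the half-space inequality, whence $x^*\in F(x,y)$.

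The genuine difficulty — and the step I expect to be the main obstacle — is the parametric version of the construction in the second paragraph: the interior point $w_0(x,y)$ and the radius $\varepsilon(x,y)$ must be chosen so that $(x,y)\mapsto w_0(x,y)$ is norm-continuous and $(x,y)\mapsto\varepsilon(x,y)$ is positive and lower semicontinuous on $\{P\neq\emptyset\}$, which is exactly what makes the last two limits in the upper semicontinuity argument legitimate and what controls the behaviour near the interface with $\{P=\emptyset\}$. This is precisely where upper mid-point continuity is needed \emph{beyond} lower mid-point continuity: it prevents $P$ from collapsing in a neighbourhood of a point at which it is non-empty, and it is the tool that transports the interior ball $w_0+\varepsilon B_X$ to nearby parameters. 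If a globally continuous selection $(w_0,\varepsilon)$ is not readily available, the fallback is to define $F$ as the norm--weak$^*$ upper-semicontinuous regularisation (the upper closed-convex-hull limit) of the rough map $(x,y)\mapsto\{x^*\in N_P(x,y):\norm{x^*}_*\le 1,\ \langle x^*,x-w_0(x,y)\rangle\ge\varepsilon(x,y)\}$ built from an arbitrary measurable choice of $(w_0,\varepsilon)$; such an $F$ is automatically norm--weak$^*$ u.s.c.\ with convex weak$^*$-compact values, its containment in $N_P\setminus\{0\}$ follows from Proposition \ref{Tusc} (which pushes the upper limit of $N_P(x_\beta,y_\beta)\cap B_X^*$ back into $N_P(x,y)\cap B_X^*$) together with a local positive lower bound on $\varepsilon$, and Proposition \ref{intersection} (pointedness of $N_P$) prevents $0$ from being created by taking closed convex hulls.
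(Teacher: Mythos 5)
Your overall architecture matches the paper's (truncate the normal cone to a base inside $B_X^*$ where $P\neq\emptyset$, put $B_X^*$ on $\{P=\emptyset\}$, and invoke Propositions \ref{nonempty}, \ref{intersection} and \ref{Tusc}), but there is a genuine gap exactly where you flag ``the main obstacle'': you never produce the continuous selection $(x,y)\mapsto(w_0(x,y),\varepsilon(x,y))$ that your definition of $F$ and your closed-graph argument require, and without it the proof is incomplete. The paper closes this gap with a specific device you do not supply: it does \emph{not} choose a single interior point varying with $(x,y)$. Instead, for each base point $(\bar x,\bar y)$ with $P(\bar x,\bar y)\neq\emptyset$, mid-point continuity gives a \emph{fixed} $\bar w$, $t$ and $\epsilon$ such that the ball $t\bar x+(1-t)\bar w+2\epsilon B_X$ sits inside $P(x,y)$ for \emph{all} $(x,y)$ in a norm-neighbourhood $O_{(\bar x,\bar y)}$; this yields the uniform estimate $\epsilon\norm{x^*}_*\leq(1-t)\langle x^*,\bar x-\bar w\rangle$ on that whole neighbourhood, hence a \emph{locally constant} truncating hyperplane $H_{(\bar x,\bar y)}$. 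A locally finite refinement of the cover $\{O_{(\bar x,\bar y)}\}$ and a subordinate partition of unity $\{f_\nu\}$ then produce $T(x,y)=\sum_\nu f_\nu(x,y)\bigl(N_P(x,y)\cap H_\nu\bigr)$; convexity of $N_P(x,y)$ keeps this inside the cone, Proposition \ref{intersection} (pointedness) rules out $0$ in the convex combination, and the closed-graph argument reduces, via the partition of unity, to finitely many maps of the form $N_P\cap H$ with $H$ fixed, which is precisely the situation Proposition \ref{Tusc} handles.

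Your proposed fallback does not repair this. An upper closed-convex-hull regularisation of a map built from an arbitrary measurable choice of $(w_0,\varepsilon)$ need not avoid $0$: nothing prevents $\varepsilon(x_\beta,y_\beta)\to 0$ along a net converging to $(x,y)$ --- the ``local positive lower bound on $\varepsilon$'' you invoke is exactly what a merely measurable selection fails to provide, and exactly what the locally uniform interior ball from mid-point continuity does provide. Moreover Proposition \ref{Tusc} is stated for a single weak$^*$-closed $H$; it does not control limits of $N_P(x_\beta,y_\beta)\cap H_\beta$ when the constraining half-space itself jumps with $\beta$, so the regularised map's containment in $N_P(x,y)\setminus\{0\}$ is unproven. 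To complete the proof, replace the pointwise-chosen $(w_0,\varepsilon)$ by the locally constant data furnished by mid-point continuity and glue with a partition of unity, as above.
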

\begin{proof}
	Suppose $(\bar x,\bar y)\in (X\times Y)\setminus E$ is arbitrary, where $E=\{(x,y)\in X\times Y|\,P(x,y)=\emptyset\}$. We aim to find an open set $O_{(\bar x,\bar y)}$ in $(X\times Y)\setminus E$ containing $(\bar x,\bar y)$ and a map $T_{(\bar x,\bar y)}:O_{(\bar x,\bar y)}\rightrightarrows X^*$ such that $T_{(\bar x,\bar y)}(x,y)\subseteq N_P(x,y)\setminus\{0\}$ for all $(x,y)\in O_{(\bar x,\bar y)}$. Suppose $\bar w\in P(\bar x,\bar y)$ then by mid-point continuity of $P$ w.r.t. $\bar x$ there exists $\epsilon_1,\epsilon_2>0$ and $t=t_{(\bar x,\bar y)}\in[0,1)$, such that 
	\begin{equation}
		t \bar x+(1-t)\bar w+\epsilon_1 B_X\subset P(x,y)~\text{for each}~(x,y)\in (\bar x,\bar y)+\epsilon_2 (B_{X}\times B_{Y}).
	\end{equation}
	Suppose $2\epsilon=\min\{\epsilon_1,\epsilon_2\}$ then we have
	\begin{equation}
		t\bar x+(1-t)\bar w+2\epsilon B_X\subset P(x,y)~\text{for each}~(x,y)\in (\bar x,\bar y)+\epsilon (B_{X}\times B_{Y}).
	\end{equation}
	For any $(x,y)\in (\bar x,\bar y)+\epsilon (B_{X}\times B_{Y})$ and $x^*\in N_P(x,y)=\{x^*|\,\langle x^*,w-x\rangle\leq 0,~\text{for any}~w\in P(x,y)\}$ we have,
	\begin{equation}
		\langle x^*, t \bar x+(1-t)\bar w+2\epsilon u-x\rangle\leq 0~\text{for all}~u\in B_X.
	\end{equation}
	This further implies,
	\begin{align*}
		2 \epsilon\norm{x^*}_*&= 2 \epsilon\sup_{u\in B_X} \langle x^*,u\rangle\\
		&\leq \langle x^*,x-(t_z \bar x+(1-t_z)w_z)\rangle\\
		&= \langle x^*, x-\bar x\rangle+(1-t_z)\langle x^*,\bar x-\bar w\rangle\\
		&\leq  \epsilon\norm{x^*}+(1-t_z) \langle x^*,\bar x-\bar w\rangle.
	\end{align*}
	Finally, we have $\epsilon\norm{x^*}_*\leq (1-t)\langle x^*,\bar x-\bar w\rangle$.
	Let us assume,
	$$H_{(\bar x,\bar y)}=\{x^*\in X^*|\, (1-t)\langle x^*,\bar x-\bar w\rangle =\epsilon\}.$$ Clearly, $N_P(x,y)\cap H_{z}\subset N_P(x,y)\setminus\{0\}$ is weak$^*$ compact as it is contained in $B_X^*$ and non-empty as per Proposition \ref{nonempty}. Suppose $O_{(\bar x,\bar y)}:= (\bar x,\bar y)+\epsilon (B_X\times B_Y)$ and define $T_{(\bar x,\bar y)}:O_{(\bar x,\bar y)}\rightrightarrows X^*$ by,
	\begin{equation}
		T_{(\bar x,\bar y)}(x,y)=N_P(x,y)\cap H_{(\bar x,\bar y)}~\text{for any}~(x,y)\in O_{(\bar x,\bar y)}.
	\end{equation}
	Now, we construct a map $T:X\times Y\rightrightarrows X^*$ as convex combination of the maps like $T_{(\bar x,\bar y)}$ by employing a partition of unity technique. We observe that, $$\mathcal{O}=\{O_{(\bar x,\bar y)}:= {(\bar x,\bar y)}+\epsilon (B_X\times B_Y) |\,{(\bar x,\bar y)}\in (X\times Y)\setminus E\}$$ forms an open cover for $(X\times Y)\setminus E$. Since $X\times Y$ is paracompact, there exists a locally finite open refinement $\mathcal{U}=\{U_\nu\}_{\nu\in \Lambda}$ for $\mathcal{O}$ such that for each $\nu\in \Lambda$ there is $(\bar x,\bar y)\in X\times Y$ satisfying $U_\nu\subset O_{(\bar x,\bar y)}$. Further, we obtain a family of continuous functions $\{f_\nu\}_{\nu\in\Lambda}$ which forms a partition of unity corresponding to $\mathcal{U}$ as per \cite[Theorem 2.90]{aliprantis}. Then, $f_\nu(x,y)=0$ if $(x,y)\notin U_\nu$ and $\sum_{\nu\in \Lambda} f_\nu(x,y)=1$ for any $(x,y)\in X\times Y$. Further, the set $\lambda(x,y)=\{\nu\in \Lambda|\,f_\nu(x,y)>0\}$ is non-empty finite. Since for each $\nu\in \Lambda$ there exists $(\bar x,\bar y)\in X$ satisfying $U_\nu\subset O_{(\bar x,\bar y)}$, we define $T_\nu:U_\nu\rightrightarrows X^*$ by $T_\nu(x,y)=T_{(\bar x,\bar y)}(x,y)$.
	
	Define $T:(X\times Y)\setminus E\rightrightarrows X^*$ as $T(x,y)=\sum_{\nu\in \Lambda(x,y)} f_\nu(x,y) T_\nu(x,y)$. We claim that $T(x,y)\subset N_P(x,y)\setminus \{0\}$.
	On contrary, suppose $0\in T(x,y)$. Then, there exists $x_\nu^*\in T_{\nu} (x,y)\subset N_P(x,y)$ for all $\nu\in \Lambda(x,y)$ such that
	$0=\sum_{\nu\in \Lambda(x,y)} f_\nu(x,y) x_\nu^*$. Since $f_{\nu}(x,y)>0$ for all $\nu\in \Lambda(x,y)$, we have
	\begin{equation*}
		-x_\nu^*=\sum_{\mu\in \Lambda(x,y)\setminus \{\nu\}} \frac{f_\mu(x,y)}{f_{\nu}(x,y)} x_\mu^*\in N_P(x,y).
	\end{equation*}
	In the view of Proposition \ref{intersection}, we have $x_\nu^*=\{0\}$. But, this contradicts the fact that $x_\nu^*\in H_z$. This proves $T(x,y)\subset N_P(x,y)\setminus \{0\}$.
	
	We claim that $F:X\times Y\rightrightarrows X^*$ constructed as,\begin{equation} \label{normalprop_eq3}
		F(x,y)=\begin{cases}
			B_X^*,&~\text{if}~P(x,y)=\emptyset\\
			T(x,y),&~\text{otherwise}
		\end{cases}
	\end{equation}
	is norm-weak$^*$ upper semi-continuous. Since $F(x,y)\subseteq B_X^*$, it is sufficient to prove that $F$ is norm-weak$^*$ closed. Consider a net $(x_\beta,y_\beta, x_\beta^*)_{\beta}\subseteq Gr(F)$ such that $(x_\beta,y_\beta)_\beta$ converges to $(x,y)$ with respect to norm topology and $x_\beta^*$ converges to $x^*$ with respect to weak$^*$ topology. We aim to show that $x^*\in F(x,y)$. If $(x,y)\in E$, we observe that $x^*\in F(x,y)=B_X^*$ as $x_\beta^*\in F(x_\beta,y_\beta)\subset B_X^*$ and $B_X^*$ is weak$^*$ closed. Suppose $(x,y)\in (X\times Y)\setminus E$. We notice that $E$ is closed set due to mid-point continuity of $P$. Hence, it is not restrictive to consider $(x_\beta,y_\beta)_\beta \subset (X\times Y)\setminus E$. Since $(x_\beta,y_\beta)_\beta$ converges to $(x,y)$, we can assume without loss of generality that $\Lambda(x,y)\subseteq \Lambda(x_\beta,y_\beta)$ for each $\beta$. Hence, it appears,
	\begin{equation*}
		T(x_\beta,y_\beta)=\sum_{\nu\in \Lambda(x_\beta,y_\beta)\setminus \Lambda(x,y)} f_\nu(x_\beta,y_\beta) T_\nu(x_\beta,y_\beta)+ \sum_{\nu\in \Lambda(x,y)} f_\nu(x_\beta,y_\beta) T_\nu(x_\beta,y_\beta).
	\end{equation*}
	Since $(x^*_\beta)_\beta$ weak$^*$ converges to $x^*$, we have nets $(x^*_{\nu,\beta})_\beta$ weak$^*$ converging to $x^*_\nu$ for each $\nu\in \Lambda(x_\beta,y_\beta)$ such that,
	\begin{equation}\label{normalprop_eq4}
		x^*_\beta=\sum_{\nu\in \Lambda(x_\beta,y_\beta)\setminus \Lambda(x,y)} f_\nu(x_\beta,y_\beta) x^*_{\nu,\beta}+ \sum_{\nu\in \Lambda(x,y)} f_\nu(x_\beta,y_\beta) x^*_{\nu,\beta}.
	\end{equation}
	Since $\{f_\nu\}_{\nu\in \Lambda}$ forms a continuous family of functions we have,
	\begin{equation*}
		\lim_{\beta} \sum_{\nu\in \Lambda(x_\beta,y_\beta)\setminus \Lambda(x,y)} f_\nu(x_\beta,y_\beta) = 1-\lim_{\beta}\sum_{\nu\in \Lambda(x,y)} f_\nu(x_\beta,y_\beta)=0.
	\end{equation*}
	This further implies first term in (\ref{normalprop_eq4}) weak$^*$ converges to $0$ as,
	\begin{equation*}
		\norm{\sum_{\nu\in \Lambda(x_\beta,y_\beta)\setminus \Lambda(x,y)} f_\nu(x_\beta,y_\beta) x^*_{\nu,\beta}}_*\leq \sum_{\nu\in \Lambda(x_\beta,y_\beta)\setminus \Lambda(x,y)} f_\nu(x_\beta,y_\beta).
	\end{equation*}
	In the view of Proposition \ref{Tusc}, we observe that each $T_\nu$ is norm-weak$^*$ closed map and $x^*_\nu\in T_\nu(x,y)$. Hence, by taking limits in (\ref{normalprop_eq4}) we have,
	\begin{equation*}
		x^*= \sum_{\nu\in \Lambda(x,y)} f_\nu(x,y) x^*_\nu\in T(x,y).
	\end{equation*}
	This proves the map $F$ defined as (\ref{normalprop_eq3}) is norm-weak$^*$ u.s.c. with non-empty convex weak$^*$ compact values such that $F(x,y)\subseteq N_P(x,y)$.
\end{proof}
\begin{remark}\label{infinite_remark}
	In the case $X$ and $Y$ are finite dimensional spaces, the map $F$ defined as $F(x,y)=co(N_P(x,y)\cap S^*_X)$, where $S^*_X=\{y\in X^*\,|\,\norm{y}_*=1\}$, is u.s.c. with non-empty, convex and compact values as per \cite{donato}. However, the similar properties does not hold for $F$ if $X$ is infinite dimensional space, as it is well known that the sphere $S^*_X$ is not weak$^*$ compact in $X^*$.
\end{remark}

\begin{proposition}\label{relation}
	Suppose $P$ is convex valued lower mid-point continuous map w.r.t. $x$ at $(x,y)$. If $x^*\in N_{P}(x,y)$ and $w\in P(x,y)$ satisfy $\langle x^*,w-x\rangle\geq 0$ then $x^*=0$. 
\end{proposition}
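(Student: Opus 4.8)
The plan is to use lower mid-point continuity to force $P(x,y)$ to contain a small open ball, and then combine the one-sided inequalities coming from the definition of the normal cone with the sign hypothesis $\langle x^*,w-x\rangle\geq 0$ to conclude that $x^*$ annihilates every direction, hence vanishes.

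First I would observe that since $w\in P(x,y)$ the set $P(x,y)$ is non-empty, so the second alternative in \eqref{normal} does not occur. Applying lower mid-point continuity of $P$ w.r.t. $x$ at $(x,y)$ to the point $w\in P(x,y)$, I obtain $t\in[0,1)$ and an open set $V$ in $X$ with $v_0:=tx+(1-t)w\in V\subset P(x,y)$. Since $V$ is open, there is $\epsilon>0$ such that $v_0+\epsilon B_X\subset P(x,y)$; in particular $v_0$ is an interior point of $P(x,y)$.

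Next, since $x^*\in N_P(x,y)$, the inequality $\langle x^*,z-x\rangle\leq 0$ holds for every $z\in P(x,y)$, and in particular for $z=v_0+\epsilon u$ with $u\in B_X$ arbitrary. Using $v_0-x=(1-t)(w-x)$, this reads $\epsilon\langle x^*,u\rangle\leq (1-t)\langle x^*,x-w\rangle=-(1-t)\langle x^*,w-x\rangle$. By hypothesis $\langle x^*,w-x\rangle\geq 0$ and $1-t>0$, so the right-hand side is $\leq 0$, giving $\langle x^*,u\rangle\leq 0$ for all $u\in B_X$. Taking the supremum over $u\in B_X$ yields $\norm{x^*}_*\leq 0$, hence $x^*=0$.

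There is essentially no obstacle here; the only point worth noting is that the sign condition on $\langle x^*,w-x\rangle$ upgrades the one-directional normal-cone inequality at the interior point $v_0$ into a bound in every direction of $B_X$, which immediately forces $x^*=0$. (In fact convexity of $P(x,y)$ is not used in this argument — only that lower mid-point continuity supplies an interior point of $P(x,y)$ — but it is kept in the statement for uniformity with the preceding propositions.)
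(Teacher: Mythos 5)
Your proof is correct and follows essentially the same route as the paper's: lower mid-point continuity supplies an interior point $v_0=tx+(1-t)w$ of $P(x,y)$, and testing the normal-cone inequality on the ball $v_0+\epsilon B_X$ together with the sign hypothesis forces $\langle x^*,u\rangle\leq 0$ for all $u\in B_X$, hence $x^*=0$. The paper merely packages the same computation as a preliminary claim about polar cones and interior points before specializing to $C=P(x,y)-\{x\}$, and your observation that convexity of $P(x,y)$ is not actually needed is accurate.
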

\begin{proof}
	Let $C$ be non-empty subset of $X$. We claim that if $y\in int(C)$ and $y^*\in C^\circ$ satisfy $\langle y^*,y \rangle\geq 0$ then $y^*=0$. Since $y\in int(C)$, we have $\epsilon>0$ such that $y+\epsilon B_X\subset C$. Hence,
	\begin{equation}\label{prop_normal}
		\langle y^*,y+\epsilon u\rangle \leq 0~\text{for all}~u\in B_X.
	\end{equation}
	Since $y^*\in C^\circ$, we have $\langle y^*,y\rangle=0$. From (\ref{prop_normal}), we obtain $\langle y^*,u\rangle \leq 0~\text{for all}~u\in B_X.$ Thus, $\langle y^*,u\rangle=0~\text{for all}~u\in B_X,$ which proves $y^*=0$.
	
	Suppose $x^*\in N_{P}(x,y)$ and $w\in P(x,y)$ satisfy $\langle x^*,w-x\rangle\geq 0$. Considering $w_t=tx+(1-t)w$, we have
	\begin{equation}\label{prop_normal2}
		\langle x^*,w_t-x\rangle=(1-t)\langle x^*,w-x\rangle\geq 0~\text{for any}~t\in [0,1).
	\end{equation}
	By lower mid-point continuity, we get $t \in [0,1)$ such that $w_t \in int(P(x,y))$. In the view of (\ref{prop_normal2}) and the fact that $w_t-x\in int(P(x,y)-\{x\})$ and $x^*\in (P(x,y)-\{x\})^\circ$ we get $x^*=0$. 
\end{proof}

\section{Existence Results}\label{sec_existence_results}
\subsection{Variational Reformulation of Preference Maximization Problems}
Let us reconsider the problem of finding maximal elements for non-ordered preference map $P:X\rightrightarrows X$ over $K\subset X$, by following the Definition \ref{maximal_element}. We show that any solution of variational inequality with principal operator $F:X\rightrightarrows X^*$ defined as (\ref{normalprop_eq3}) is maximal element for $P$ over $K$. Note that the results derived in the previous section for normal cone operators can be easily adapted for preference maps $P$ which have no dependence on $y\in Y$.
\begin{theorem}\label{reformulation}
	Let $K$ be non-empty closed convex subset of Banach space $X$. Suppose $P:X\rightrightarrows X$ is convex valued mid-point continuous map and $x\notin P(x)$ for any $x\in X$. Then, $\bar x\in \mathbb{S}(P,K)$ if $\bar x\in K$ solves $VI(F,K)$ where $F$ is defined as (\ref{normalprop_eq3}). 
\end{theorem}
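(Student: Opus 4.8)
The plan is to argue by contradiction and to reduce everything to Proposition \ref{relation}, which is exactly the statement that a preferred point cannot make a nonzero normal functional nonnegative. First I would dispose of the trivial case: if $P(\bar x)=\emptyset$ then $P(\bar x)\cap K=\emptyset$ and, by Definition \ref{maximal_element}, $\bar x\in\mathbb{S}(P,K)$ with nothing further to prove. So from now on assume $P(\bar x)\neq\emptyset$. Since $\bar x$ solves $VI(F,K)$, there is $\bar x^*\in F(\bar x)$ with $\langle \bar x^*,y-\bar x\rangle\geq 0$ for every $y\in K$. Because $P(\bar x)\neq\emptyset$, the definition (\ref{normalprop_eq3}) of $F$ gives $F(\bar x)=T(\bar x)$, and the $y$-independent version of Proposition \ref{normalmap} applies (mid-point continuity implies lower mid-point continuity, $P$ is convex valued, and $\bar x\notin P(\bar x)$ by hypothesis), so $T(\bar x)\subseteq N_P(\bar x)\setminus\{0\}$. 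Hence the multiplier $\bar x^*$ furnished by the variational inequality is a \emph{nonzero} element of $N_P(\bar x)$.

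Next I would suppose, towards a contradiction, that $\bar x\notin\mathbb{S}(P,K)$, i.e. that there exists $w\in P(\bar x)\cap K$. Plugging $y=w\in K$ into the variational inequality yields $\langle \bar x^*,w-\bar x\rangle\geq 0$. Now $\bar x^*\in N_P(\bar x)$, $w\in P(\bar x)$, and $\langle \bar x^*,w-\bar x\rangle\geq 0$, so Proposition \ref{relation} (again applicable since $P$ is convex valued and lower mid-point continuous) forces $\bar x^*=0$, contradicting $\bar x^*\neq 0$. Therefore $P(\bar x)\cap K=\emptyset$, which is precisely $\bar x\in\mathbb{S}(P,K)$.

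The argument is short once the structural results of the preceding sections are available, and I do not expect a genuine analytic obstacle: the real content was packed into the construction of $F$ in Proposition \ref{normalmap} (non-emptiness away from $0$) and into Proposition \ref{relation}. The only points needing care are the case split $P(\bar x)=\emptyset$ versus $P(\bar x)\neq\emptyset$ and the verification that the hypotheses of Propositions \ref{normalmap} and \ref{relation} are met in the $y$-free setting. I would also note in passing that closedness and convexity of $K$ play no role in this implication — they are only used $w\in K$ and the VI inequality at $y=w$ — so these assumptions are there for the existence theory rather than for showing that a $VI(F,K)$ solution is maximal.
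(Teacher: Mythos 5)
Your proof is correct and follows essentially the same route as the paper's: case split on $P(\bar x)=\emptyset$, then use $F(\bar x)\subseteq N_P(\bar x)\setminus\{0\}$ from Proposition~\ref{normalmap} together with Proposition~\ref{relation} applied to a hypothetical $w\in P(\bar x)\cap K$ to force the contradiction $\bar x^*=0$. Your side remark that closedness and convexity of $K$ are not needed for this implication is accurate but does not change the argument.
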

\begin{proof}
	Suppose $\bar x\in K$ solves $VI(F,K)$, that is
	\begin{equation}\label{thmeq1}
		\exists\,\bar x^*\in F(\bar x), \langle \bar x^*,y-\bar x\rangle\geq 0~\text{for all}~y\in K.
	\end{equation}
	If $P(\bar x)=\emptyset$ then $\bar x$ is maximal element. Suppose $P(\bar x)\neq \emptyset$, then we aim to show that $P(\bar x)\cap K=\emptyset$. On contrary, suppose $z\in P(\bar x)\cap K$. From (\ref{thmeq1}), it appears that 
	$\bar x^*\in F(\bar x)\subseteq N_P(\bar x)\setminus\{0\}$ satisfies,
	\begin{equation}
		\langle \bar x^*,z-\bar x\rangle \geq 0.
	\end{equation} 
	According to Proposition \ref{relation}, we have $\bar x^*=0$. But, we know that $P(\bar x)\neq \emptyset$ and $x^*\in F(\bar x)\subset N_P(\bar x)\setminus \{0\}$ by following the arguments in proof of Proposition \ref{normalmap}. This shows our assumption $P(\bar x)\cap K\neq \emptyset$ is false and $\bar x\in \mathbb{S}(P,K)$.
\end{proof}
Based on the variational reformulation of preference maximization problems obtained in the above result Theorem \ref{reformulation}, we now deduce the sufficient conditions for the existence of maximal elements.
\begin{theorem}\label{existence_maximal}
	Suppose $P:X\rightrightarrows X$ is convex valued mid-point continuous map satisfying $x\notin P(x)$ for any $x\in X$ and $K\subset X$ is non-empty compact and convex. Then, the set of maximal elements $\mathbb{S}(P,K)\neq \emptyset$.
\end{theorem}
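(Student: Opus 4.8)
The plan is to combine the variational reformulation from Theorem \ref{reformulation} with a fixed-point / existence result for the variational inequality $VI(F,K)$. By Theorem \ref{reformulation}, it suffices to show that $VI(F,K)$ admits a solution $\bar x \in K$, where $F$ is the principal operator defined in (\ref{normalprop_eq3}). The operator $F$ has, by Proposition \ref{normalmap}, exactly the regularity one wants for a Stampacchia-type existence theorem: it is norm-weak$^*$ upper semi-continuous with non-empty, convex, weak$^*$-compact values, and moreover $F(x) \subseteq B_X^*$, so its range is uniformly bounded. Since $K$ is compact and convex, we are in position to invoke a standard existence result for variational inequalities with set-valued maps (a Ky Fan / Browder-type argument, or the classical result in \cite{stampbook}).

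The key steps, in order, would be: (i) record that $K$ non-empty, compact, convex and $F$ norm-weak$^*$ u.s.c. with non-empty convex weak$^*$-compact values puts us in the hypotheses of the set-valued Stampacchia variational inequality existence theorem; apply it to obtain $\bar x \in K$ and $\bar x^* \in F(\bar x)$ with $\langle \bar x^*, y - \bar x\rangle \geq 0$ for all $y \in K$. (ii) Observe that the hypotheses on $P$ here — convex valued, mid-point continuous, $x \notin P(x)$ for all $x$ — are precisely those of Theorem \ref{reformulation}, so that result applies verbatim. (iii) Conclude $\bar x \in \mathbb{S}(P,K)$, hence $\mathbb{S}(P,K) \neq \emptyset$.

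The main obstacle is step (i): one must be careful that the existence theorem for $VI(F,K)$ is available in the Banach-space setting with the correct topologies. The natural framework is to equip $K$ with the norm topology (it is compact there) and $X^*$ with the weak$^*$ topology; then continuity of the pairing $\langle \cdot, \cdot\rangle$ on $B_X^* \times K$ (which is what is used implicitly in Proposition \ref{Tusc}, and which holds because $F(K) \subseteq B_X^*$ and $K$ is norm-compact) makes the usual KKM/Fan argument go through. One defines, for each $y \in K$, the set $G(y) = \{x \in K : \exists x^* \in F(x),\ \langle x^*, y - x\rangle \geq 0\}$, shows each $G(y)$ is closed (hence compact) in $K$ using the norm-weak$^*$ closedness and convex weak$^*$-compactness of $F$, verifies the finite-intersection (KKM) property via convexity of $K$ and of the values of $F$ together with a Sion-type minimax or the standard Fan lemma, and extracts a point in $\bigcap_{y \in K} G(y)$, which is exactly a solution of $VI(F,K)$.

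Finally, one should note — as the paragraph preceding the theorem already hints — that all the auxiliary results of Section~4 were stated for maps $P : X \times Y \rightrightarrows X$ but specialize immediately to $P : X \rightrightarrows X$ by taking $Y$ a singleton (or ignoring the $y$-variable), so Proposition \ref{normalmap} and Theorem \ref{reformulation} apply directly with no further work.
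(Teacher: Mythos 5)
Your proposal follows essentially the same route as the paper: reduce via Theorem \ref{reformulation} to showing $VI(F,K)$ is solvable, use Proposition \ref{normalmap} to get that $F$ is norm-weak$^*$ u.s.c.\ with non-empty convex weak$^*$-compact values, and then invoke an existence theorem for set-valued variational inequalities on the compact convex set $K$. The only difference is that the paper simply cites the quasi-variational existence result of Castellani--Giuli (\cite[Theorem 3.3]{giuli}, specialized to a constant constraint map) where you sketch the underlying KKM/Fan argument; both are fine.
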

\begin{proof}
	In the view of Theorem \ref{reformulation}, it is sufficient to show that $VI(F,K)$ admits a solution. According to Proposition \ref{normalmap}, the map $F$ is norm-weak$^*$ upper semi-continuous with non-empty convex and weak$^*$ compact values. Hence, we see that variational inequality $VI(F,K)$ admits a solution $\bar x\in K$ by assuming the constraint map $K$ is a constant for all $x$ in \cite[Theorem 3.3]{giuli}. Finally, $\bar x\in K$ is required maximal element of $P$ over $K$.
\end{proof}

\subsection{Variational Reformulation of Generalized Games}
In this section, we aim to show that any solution of quasi-variational inequality with principal operator $\mathcal{F}$ (defined as (\ref{mathcal_F})) and constraint map $K=\prod_{\nu\in \Lambda} K_\nu$ is an equilibrium for game $\Gamma=(X_\nu,K_\nu,P_\nu)_{\nu\in \Lambda}$ (refer (\ref{GNEPshafer}) in Section \ref{intro}).

By following (\ref{normal}), we define ${N}_{P_\nu} (x):X_\nu\times X_{-\nu}\rightrightarrows X^*_\nu$ corresponding to preference $P_\nu$ as $N_{P_\nu}(x_\nu,x_{-\nu})=N_{P_\nu (x)} (x_\nu)$, that is,
\begin{equation*}
	N_{P_\nu}(x_\nu,x_{-\nu})= \begin{cases}
		\{x_\nu^*\in X_\nu^*|\,\langle x_\nu^*,y_\nu-x_\nu\rangle \leq 0~\text{for all}~y_\nu\in P_\nu (x)\},&\text{if}~P_\nu (x)\neq\emptyset\\
		X_\nu^*,&\text{otherwise}.
	\end{cases}
\end{equation*}
In addition to notations in (\ref{X}), let $X^*=\prod_{\nu\in \Lambda}X_\nu^*$. Suppose $\mathcal{F}:X\rightrightarrows X^*$ is defined as,
\begin{equation}\label{mathcal_F}
	\mathcal{F}(x)=\prod_{\nu\in \Lambda} F_\nu(x),
\end{equation} 
where $F_\nu:X_\nu\times X_{-\nu}\rightrightarrows X_\nu^*$ is considered as (\ref{normalprop_eq3}).

Following result shows that any solution of $QVI(\mathcal{F},K)$ is equilibrium for the generalized game $\Gamma=(X_\nu,K_\nu,P_\nu)_{\nu\in \Lambda}$ considered in Section \ref{intro}.
\begin{theorem}\label{reformulation_game}
	For any $\nu\in \Lambda$, assume that,
	\begin{itemize}
		\item[(a)] $P_\nu:X\rightrightarrows X_\nu$ is convex valued mid-point continuous map with respect to $X_\nu$ such that $x_\nu\notin P(x)$ for any $x\in X$; 
		\item[(b)] $K_\nu:C\rightrightarrows C_\nu$ admits non-empty closed convex values.
	\end{itemize} 
	Then, $\bar x \in \mathbb{S}(\Gamma)$ if $\bar x$ solves $QVI(\mathcal{F},K)$ where $\mathcal{F}$ is defined as (\ref{mathcal_F}).
\end{theorem}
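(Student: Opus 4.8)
The plan is to run the single-player argument of Theorem~\ref{reformulation} simultaneously in every coordinate, using the auxiliary results of Section~4 with the ``parameter'' $y$ there instantiated as the rival-strategy block $x_{-\nu}$. First I would unpack the hypothesis that $\bar x$ solves $QVI(\mathcal{F},K)$ with $K=\prod_{\nu\in\Lambda}K_\nu$. By definition this means $\bar x\in K(\bar x)$, and since $K(\bar x)=\prod_{\nu\in\Lambda}K_\nu(\bar x)$ this already yields $\bar x_\nu\in K_\nu(\bar x)$ for every $\nu\in\Lambda$, which is the first half of~(\ref{GNEPshafer}). Moreover there is $\bar x^*=(\bar x^*_\nu)_{\nu\in\Lambda}\in\mathcal{F}(\bar x)=\prod_{\nu\in\Lambda}F_\nu(\bar x)$ with $\langle\bar x^*,y-\bar x\rangle\ge 0$ for all $y\in K(\bar x)$.

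Next, fix $\nu\in\Lambda$ and argue by contradiction that $P_\nu(\bar x)\cap K_\nu(\bar x)=\emptyset$; suppose instead $z_\nu\in P_\nu(\bar x)\cap K_\nu(\bar x)$. The key manoeuvre is the choice of test vector: since $\bar x_\mu\in K_\mu(\bar x)$ for every $\mu\ne\nu$ and $z_\nu\in K_\nu(\bar x)$, the point $y=(z_\nu,\bar x_{-\nu})$ belongs to $K(\bar x)$, and the increment $y-\bar x$ vanishes in every coordinate except the $\nu$-th, where it equals $z_\nu-\bar x_\nu$. Feeding $y$ into the quasi-variational inequality therefore collapses it to
\[
\langle\bar x^*_\nu,\,z_\nu-\bar x_\nu\rangle\ge 0 .
\]

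To reach the contradiction I would invoke the two structural facts about $F_\nu$. On the one hand $P_\nu(\bar x)\ni z_\nu$ is non-empty, so by the definition~(\ref{normalprop_eq3}) of $F_\nu$ and the conclusion of Proposition~\ref{normalmap} (applied to $P_\nu:X_\nu\times X_{-\nu}\rightrightarrows X_\nu$, which is convex valued and mid-point continuous w.r.t.\ $X_\nu$ with $x_\nu\notin P_\nu(x)$, i.e.\ exactly hypothesis~(a)) we get $\bar x^*_\nu\in F_\nu(\bar x)\subseteq N_{P_\nu}(\bar x)\setminus\{0\}$; in particular $\bar x^*_\nu\ne 0$. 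On the other hand, Proposition~\ref{relation} with its parameter set to $\bar x_{-\nu}$ applies because $P_\nu$ is convex valued and lower mid-point continuous w.r.t.\ $x_\nu$, and with $x^*=\bar x^*_\nu\in N_{P_\nu}(\bar x)$, $w=z_\nu\in P_\nu(\bar x)$ and $\langle\bar x^*_\nu,z_\nu-\bar x_\nu\rangle\ge 0$ it forces $\bar x^*_\nu=0$ --- contradicting $\bar x^*_\nu\ne 0$. Hence $P_\nu(\bar x)\cap K_\nu(\bar x)=\emptyset$ for each $\nu\in\Lambda$, which together with $\bar x_\nu\in K_\nu(\bar x)$ for each $\nu$ gives precisely $\bar x\in\mathbb{S}(\Gamma)$.

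I do not expect a serious obstacle: the argument is essentially that of Theorem~\ref{reformulation} decoupled across players by the product structure of $K$ and the block-diagonal test vector. The only points requiring care are bookkeeping ones --- checking that the results of Section~4, stated for maps $P:X\times Y\rightrightarrows X$, transfer verbatim with $Y=X_{-\nu}$, and that $\bar x^*_\nu$ stays nonzero precisely because $F_\nu$ was built (in Proposition~\ref{normalmap}) to avoid $0$ whenever the preferred set is non-empty.
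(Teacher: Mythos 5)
Your proposal is correct and follows essentially the same route as the paper's proof: testing the quasi-variational inequality with $y=(z_\nu,\bar x_{-\nu})$ for a hypothetical $z_\nu\in P_\nu(\bar x)\cap K_\nu(\bar x)$, then deriving the contradiction between Proposition~\ref{relation} (forcing $\bar x_\nu^*=0$) and Proposition~\ref{normalmap} (guaranteeing $\bar x_\nu^*\in N_{P_\nu}(\bar x)\setminus\{0\}$). Your version is in fact slightly more careful than the paper's in spelling out why the test vector lies in $K(\bar x)$ and why the pairing collapses to the $\nu$-th coordinate.
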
	
\begin{proof}
	If $\bar x$ solves $QVI(\mathcal{F},K)$, then 
	\begin{equation}\label{thm_eq1}
		\exists\, \bar x^*\in \mathcal{F}(\bar x), \langle \bar x^*,y-\bar x\rangle \geq 0, ~\text{for all}~y\in K(\bar x).
	\end{equation}
	We claim that $\bar x\in \mathbb{S}(\Gamma).$ Suppose $\nu\in \Lambda$ is arbitrary. If $P_\nu (\bar x)=\emptyset$ then $P_\nu(\bar x)\cap K_\nu (\bar x)=\emptyset$. In the case $P_\nu(\bar x)\neq \emptyset$, we aim to show that $P_\nu(\bar x)\cap K_\nu(\bar x)=\emptyset$. On contrary, suppose $z_\nu\in P_\nu(\bar x)\cap K_\nu(\bar x)$. By substituting $y=(z_\nu,\bar{x}_{-\nu})$ in  
	(\ref{thm_eq1}) it appears that $\bar x_\nu^*\in F_\nu (\bar x_\nu,\bar x_{-\nu})\subset N_{P_\nu} (\bar x_\nu,\bar x_{-\nu})$ satisfies,
	\begin{equation}
		\langle \bar x_\nu^*,z_\nu-\bar x_\nu\rangle \geq 0.
	\end{equation} 
	According to from Proposition \ref{relation}, we have $\bar x_\nu^*=0$. But, 
	we know that $P_\nu(\bar x)\neq \emptyset$ and $\bar x_\nu^*\in F_\nu (\bar x)\subseteq N_{P_\nu} (\bar x)\setminus \{0\}$ by following the arguments given in proof of Proposition \ref{normalmap}. This shows, our assumption $P_\nu(\bar x)\cap K_\nu(\bar x)\neq \emptyset$ is false. Since $\nu$ is chosen arbitrarily, we observe $P_\nu(\bar x)\cap K_\nu(\bar x)= \emptyset$ for any $\nu\in \Lambda$ and $\bar x\in \mathbb{S}(\Gamma)$. 
\end{proof}
Based on the variational reformulation of generalized games obtained in the above result Theorem \ref{reformulation_game}, we now deduce the sufficient conditions for the existence of equilibrium.
\begin{theorem}\label{existence}
	For any $\nu\in \Lambda$, assume that,
	\begin{itemize}
		\item[(a)] $P_\nu:X\rightrightarrows X_\nu$ is convex valued mid-point continuous map with respect to $X_\nu$ such that $x_\nu\notin P(x)$ for any $x\in X$; 
		\item[(b)] $K_\nu:C\rightrightarrows C_\nu$ is closed lower semi-continuous map with non-empty convex values and $K_\nu (C)$ relatively compact. 
	\end{itemize} 
	Then, the set of equilibrium points $\mathbb{S}(\Gamma)\neq \emptyset$.
\end{theorem}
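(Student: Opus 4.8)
The plan is to mirror the derivation of Theorem~\ref{existence_maximal} from Theorem~\ref{reformulation}: invoke the variational reformulation of Theorem~\ref{reformulation_game} to reduce the existence of an equilibrium to the solvability of $QVI(\mathcal{F},K)$, where $\mathcal{F}=\prod_{\nu\in\Lambda}F_\nu$ is as in (\ref{mathcal_F}) and $K=\prod_{\nu\in\Lambda}K_\nu$. First I would note that hypothesis (b) implies hypothesis (b) of Theorem~\ref{reformulation_game}: a map with closed graph whose range is contained in a compact set is u.s.c.\ and has closed values (cf.\ \cite{aliprantis}), so, after restricting its codomain to the compact set $\overline{K_\nu(C)}$, each $K_\nu$ is a continuous map with non-empty closed convex values. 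Hence, by Theorem~\ref{reformulation_game}, it suffices to show that $QVI(\mathcal{F},K)$ admits a solution.

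Next I would record the structural properties of $\mathcal{F}$ and $K$. For each $\nu\in\Lambda$, hypothesis (a) is exactly the hypothesis of Proposition~\ref{normalmap} applied with the roles of $X$ and $Y$ played by $X_\nu$ and $X_{-\nu}$, so $F_\nu$, defined by (\ref{normalprop_eq3}), is norm-weak$^*$ u.s.c.\ with non-empty convex weak$^*$-compact values and $F_\nu(X)\subseteq B_{X_\nu}^*$. Since $\Lambda$ is finite, the product map $\mathcal{F}$ then has non-empty convex weak$^*$-compact values, is norm-weak$^*$ u.s.c.\ (a finite product of u.s.c.\ maps is u.s.c., and under the identification $X^*=\prod_\nu X_\nu^*$ the product of the weak$^*$ topologies is the weak$^*$ topology), and satisfies $\mathcal{F}(X)\subseteq\prod_\nu B_{X_\nu}^*$, a bounded set. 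Likewise $K=\prod_\nu K_\nu$ inherits from the previous paragraph the property of being a continuous map with non-empty closed convex values, and $K(C)\subseteq\prod_\nu K_\nu(C)$ is relatively compact.

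To bring in compactness I would pass to the set $D:=\prod_{\nu\in\Lambda}\overline{co}\left(\overline{K_\nu(C)}\right)$. By Mazur's theorem the closed convex hull of a norm-compact set is norm-compact, so each factor, and hence $D$, is compact and convex; moreover $K_\nu(C)\subseteq C_\nu$ with $C_\nu$ closed convex gives $D\subseteq C$, while $K(x)\subseteq\prod_\nu\overline{K_\nu(C)}\subseteq D$ for every $x\in D$. Thus $K$ restricts to a map $D\rightrightarrows D$ retaining all of the above regularity, and applying the quasi-variational existence result \cite[Theorem 3.3]{giuli} to $QVI(\mathcal{F},K)$ over $D$ --- whose hypotheses (norm-compact convex domain, l.s.c.\ constraint map with non-empty closed convex values, norm-weak$^*$ u.s.c.\ operator with non-empty convex weak$^*$-compact values and bounded range) are satisfied --- yields some $\bar x\in D$ solving $QVI(\mathcal{F},K)$. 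By Theorem~\ref{reformulation_game}, $\bar x\in\mathbb{S}(\Gamma)$, so $\mathbb{S}(\Gamma)\neq\emptyset$.

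The step I expect to be the main obstacle is the application of \cite[Theorem 3.3]{giuli}: one must match exactly its topological setting --- domain with the norm topology, operator u.s.c.\ from the norm topology into the weak$^*$ topology with weak$^*$-compact values, the link being the boundedness of $\mathcal{F}(D)$ exploited via a Proposition~\ref{Tusc}-type argument --- and verify that the reduction to $D$ does not destroy well-posedness of the constraint map, which is precisely where closedness and convexity of each $C_\nu$ are used. A secondary point worth making explicit rather than leaving silent is that a finite product of l.s.c.\ (resp.\ u.s.c.) set-valued maps is again l.s.c.\ (resp.\ u.s.c.), and that a closed-graph map with relatively compact range is u.s.c.\ with closed values.
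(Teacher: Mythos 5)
Your proposal follows essentially the same route as the paper: reduce to solvability of $QVI(\mathcal{F},K)$ via Theorem~\ref{reformulation_game}, invoke Proposition~\ref{normalmap} for the norm-weak$^*$ upper semi-continuity, non-emptiness, convexity and weak$^*$ compactness of the values of $\mathcal{F}$, and then apply \cite[Theorem 3.3]{giuli}. The only difference is that you make explicit several verifications the paper leaves implicit (closedness of values from the closed graph plus relative compactness of the range, stability of the relevant properties under finite products, and the restriction to the compact convex set $D$), which is a faithful filling-in rather than a different argument.
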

\begin{proof}
	In the view of Theorem \ref{reformulation_game}, it is sufficient to prove that $QVI(\mathcal{F},K)$ admits a solution. As per Proposition \ref{normalmap}, we know that the map $\mathcal{F}=\prod_{\nu}F_\nu$ is norm-weak$^*$ upper semi-continuous with non-empty convex and weak$^*$ compact values. Finally, we observe that $QVI(\mathcal{F},K)$ admits a solution $\bar x\in K(\bar x)$ as per \cite[Theorem 3.3]{giuli}. Finally, $\bar x\in \mathbb{S}(\Gamma)$ is the required equilibrium for generalized game $\Gamma$. 
\end{proof}



\begin{thebibliography}{99}
	\bibitem{aliprantis} C.D. Aliprantis, K.C. Border, Infinite Dimensional Analysis: A Hitchhiker's Guide, Springer, Berlin, 2006.
	\bibitem{debreu} K.J. Arrow, G. Debreu, Existence of an equilibrium for a competitive economy, Econometrica 22 (1954) 265--290.
	
		\bibitem{aumann} R.J. Aumann: Utility theory without the completeness axiom. Econometrica 30 (1962) 445--462.
	
	\bibitem{ausselnormal} D. Aussel, New developments in quasiconvex optimization, in: S.A.R. Al-Mezel, F.R.M. Al-Solamy, Q.H. Ansari (Eds.), Fixed Point Theory, Variational Analysis and Optimization, Chapman and Hall, CRC Press, 2014. 
	\bibitem{ausseltime} D. Aussel, R. Gupta, A. Mehra, Evolutionary variational inequality formulation of the generalized Nash equilibrium problem, J. Optim. Theory Appl. 169 (2016) 74--90.
	\bibitem{cotrina} O. Bueno, Y. Garc{\'\i}a, J. Cotrina, Generalized ordinal Nash games: Variational approach, Oper. Res. Letters  51 (2023) 353--356.
	\bibitem{giuli} M. Castellani, M. Giuli, A continuity result for the adjusted normal cone operator, J. Optim. Theory Appl. 200 (2024) 858--873.
	\bibitem{decision_processes} G. Debreu, Representation of a preference ordering by a numerical function, in: M.R. Thrall, C.H. Coombs, R.C. Davis (Eds.), Decision processes, 159--165, Wiley New York, 1954.
	\bibitem{donato} M.B. Donato, A. Villanacci, Variational inequalities, maximal elements and economic equilibria, J. Math. Anal. Appl. 519 (2023) 126769.
	
	
	\bibitem{faccsurvey} F. Facchinei, C. Kanzow, Generalized Nash equilibrium problems, Ann. Oper. Res. 175 (2010) 177--211.
	\bibitem{faraci} F. Faraci, F. Raciti, On generalized Nash equilibrium in infinite dimension: the Lagrange multipliers approach,
	Optimization, 64 (2012) 321--338.
	
	\bibitem{gorno} L. Gorno, A.T. Rivello, A maximum theorem for incomplete preferences, J. Math. Econ. 106 (2023) 102822.
	
	\bibitem{stampbook} D. Kinderlehrer, G. Stampacchia, An introduction to variational inequalities and their applications, SIAM, 2000.
	\bibitem{krepsweak} D.M. Kreps, Microeconomic foundations (Vol. 1): Choice and Competitive Markets, Princeton University Press, Princeton, 2013.
	\bibitem{milasipref} M. Milasi, A. Puglis, C. Vitanza, On the study of the economic equilibrium problem through preference relations, J. Math. Anal. Appl. 477 (2019) 153--162. 
	\bibitem{milasipref2021} M. Milasi, D. Scopelliti, A variational approach to the maximization of preferences without numerical representation, J. Optim. Theory Appl. 190 (2021) 879--893.
	\bibitem{shafer} W. Shafer, H. Sonnenschein, Equilibrium in abstract economies without ordered preferences, J. Math. Econ. 2 (1975) 345--348.  
	
	
	
	\bibitem{shivani1} A. Sultana, S. Valecha, Variational Reformulation of Generalized Nash Equilibrium Problems with Non-ordered Preferences, arXiv preprint (2023) 
	https://doi.org/10.48550/arXiv.2302.08702.  
	
	\bibitem{shivani2} A. Sultana, S. Valecha, Projected solution for Generalized Nash Games with Non-ordered Preferences,  arXiv preprint (2023) https://doi.org/10.48550/arXiv.2305.07275.
	
	
	
	\bibitem{yannelis_infinite} N. C. Yannelis, N. D. Prabhakar, Existence of maximal elements and equilibria in linear topological spaces, J. Math. Econ. 12 (1983) 223--245.
	
	\bibitem{zhou} J.X. Zhou, On the existence of equilibrium for abstract economies, J. Math. Anal. Appl. 193 (1995) 839-858.
	
\end{thebibliography}



	
	
	
\end{document}